\documentclass[a4paper,11pt, reqno]{amsart}

\usepackage{amsmath,amssymb,graphicx}
\usepackage{hyperref}
\usepackage{cancel}
\usepackage{soul}
\usepackage[utf8]{inputenc}
\usepackage[english]{babel}
\usepackage[normalem]{ulem}
 
\usepackage[dvipsnames]{xcolor}
\usepackage{cancel}
\usepackage{dynkin-diagrams}
\usepackage{tikz}
\usetikzlibrary{arrows.meta}
\usepackage{graphicx}
\usepackage[left=1.12in, right=1.12in, top=1.5in, bottom=1.5in]{geometry}
\usepackage{caption}
\usepackage{amsfonts}
\usepackage{arydshln}
\usepackage{appendix}
\usepackage{amsmath,amssymb,verbatim,mathrsfs,latexsym,paralist}
\usepackage{graphicx}
\usepackage{epstopdf}
\usepackage{indentfirst}
\usepackage{multirow}
\usepackage{stmaryrd}
\usepackage{amsthm}
\usepackage{longtable}
\usepackage{float}
\usepackage{cancel}
\allowdisplaybreaks[4]

\numberwithin{equation}{section}

\newtheorem{theorem}{Theorem}[section]
\newtheorem{lemma}[theorem]{Lemma}
\newtheorem{proposition}[theorem]{Proposition}
\newtheorem{example}[theorem]{Example}

\newtheorem{definition}{Definition}[section]

\theoremstyle{plain}
\newtheorem*{remark}{Remark}

\DeclareMathOperator{\End}{End}

\DeclareMathOperator{\Id}{Id}

\DeclareMathOperator{\re}{re}
\DeclareMathOperator{\im}{im}

\title[Wide Subalgebras of Kac--Moody Algebras]
{Wide Regular Subalgebras of Symmetrizable Kac--Moody Algebras and an Extension of Schur's Lemma}

\author[A. Douglas]{Andrew Douglas\textsuperscript{1,2,3}}

\address{\textsuperscript{1}Department of Mathematics, New York City College of Technology,
City University of New York, Brooklyn, NY 11201, USA}

\address{\textsuperscript{2}Ph.D.\ Programs in Mathematics and Physics, Graduate Center,
City University of New York, New York, NY 10016, USA}

\address{\textsuperscript{3}Department of Mathematics, University of Toronto,
Toronto, ON M5S 2E4, Canada}

\email{adouglas@citytech.cuny.edu}

\author[A. Ali]{Abid Ali\textsuperscript{4}}

\address{\textsuperscript{4}Department of Mathematics and Statistics, University of Saskatchewan,
Saskatoon, SK S7N 5E6, Canada}

\begin{document}

\keywords{Symmetrizable Kac--Moody algebras,
Regular subalgebras,
Wide subalgebras,
Integrable highest weight modules,
Root closure,
Schur's lemma}

\subjclass[2020]{17B05, 17B10, 17B20, 17B22, 17B67}

\begin{abstract}
The behavior of representations under restriction is a central theme in Lie theory. We study wide regular subalgebras of symmetrizable Kac--Moody algebras, extending work of Douglas and Repka on semisimple Lie algebras. A subalgebra is \emph{wide} if every irreducible integrable highest weight module remains indecomposable upon restriction. Let $\mathfrak{g}$ be a symmetrizable Kac--Moody algebra with Cartan subalgebra $\mathfrak{h}$, root system $\Phi$, simple roots $\Pi$, and root space decomposition $\mathfrak{g}=\mathfrak{h}\oplus\bigoplus_{\alpha\in\Phi}\mathfrak{g}_\alpha$. Denote by $\Phi_{\re}$ the set of real roots. To a regular subalgebra $\mathfrak{s}$ normalized by $\mathfrak{h}$, we associate a closed subset $T\subseteq \Phi$ by declaring $\alpha\in T$ if $\mathfrak{s}\cap \mathfrak{g}_\alpha\ne \{0\}$. Our main result is an extension of Schur's lemma: if $\mathfrak{h}\subseteq \mathfrak{s}$ and the real closure of $(T\cup(-T))\cap \Phi_{\re}$ contains $\Pi$, then $(\End V)^{\mathfrak{s}}=\mathbb{C}\Id_V$ for every irreducible integrable highest weight module $V$. As a consequence, this real-root closure condition yields a sufficient condition for wideness. In the affine case, we establish a converse: if $\mathfrak{s}$ is wide, then the closure of $T\cup(-T)$ in $\Phi$ is all of $\Phi$, and this implication holds without assuming that $\mathfrak{h}\subseteq \mathfrak{s}$. A key ingredient is a structural result showing that closed subsets of affine root systems are closed under arbitrary finite root sums that remain roots.
\end{abstract}

\dedicatory{Dedicated to the memory of Joe Repka.}

\maketitle

\section{Introduction}\label{intro}

The behavior of representations under restriction is a central theme in Lie theory. A fundamental question is to determine when an irreducible module remains indecomposable upon restriction to a subalgebra. We examine wide regular subalgebras of symmetrizable Kac--Moody algebras, extending 
work of Douglas and Repka on semisimple Lie algebras~\cite{DouglasRepka2025}. Let $\mathfrak{g}$ be a symmetrizable Kac--Moody algebra with Cartan subalgebra $\mathfrak{h}$, root system $\Phi$, simple roots $\Pi$, and root space decomposition
\[
\mathfrak{g} = \mathfrak{h} \oplus \bigoplus_{\alpha \in \Phi} \mathfrak{g}_\alpha.
\]
Denote by $\Phi_{\re}$ the set of real roots. A subalgebra is \emph{wide} if every irreducible integrable highest weight $\mathfrak{g}$-module remains indecomposable upon restriction.

The theory of regular subalgebras goes back to the foundational work of Dynkin~\cite{Dynkin1952}. The study of wide subalgebras dates at least to the work of Douglas and Premat~\cite{DouglasPremat2007}, and is closely connected to classical topics such as branching rules. The term \emph{wide subalgebra} was introduced by Panyushev~\cite{panyu} in 2014. Most relevant to the present work, Douglas and Repka~\cite{DouglasRepka2025} established necessary and sufficient conditions for regular subalgebras of semisimple Lie algebras to be wide.

In this article, we extend aspects of this theory to the symmetrizable Kac--Moody setting. To a regular subalgebra $\mathfrak{s}$ normalized by $\mathfrak{h}$, we associate a closed subset $T\subseteq \Phi$ by declaring that $\alpha\in T$ if $\mathfrak{s}\cap \mathfrak{g}_\alpha\ne \{0\}$.

Our main result is an extension of Schur's lemma: if $\mathfrak{h}\subseteq \mathfrak{s}$ and the real closure of $(T\cup(-T))\cap \Phi_{\re}$ contains $\Pi$, then $(\End V)^\mathfrak{s}= \mathbb{C} \Id_V$, where 
\[(\End V)^{\mathfrak{s}} \coloneqq \{\, F \in \End(V) \mid x \cdot F=0 \text{ for all } x \in \mathfrak{s} \},\]
and $V$ is an irreducible integrable highest weight $\mathfrak{g}$-module. As a consequence, this real-root closure condition yields a sufficient condition for wideness.

In the affine case, we establish a converse: if $\mathfrak{s}$ is wide, then the closure of $T\cup(-T)$ in $\Phi$ is all of $\Phi$, and notably this implication does not require the assumption that $\mathfrak{h}\subseteq \mathfrak{s}$. This relies on a structural result showing that closed subsets of affine root systems are closed under arbitrary finite root sums that remain roots. In the finite-dimensional semisimple case, these closure conditions coincide, yielding a necessary and sufficient criterion for wideness, which does not require $\mathfrak{h}\subseteq \mathfrak{s}$ (cf.~\cite[Corollary 3.10]{DouglasRepka2025}).

Extending the results of Douglas and Repka to the symmetrizable Kac--Moody setting requires a different conceptual approach. In contrast to the finite-dimensional case, the root system contains both real and imaginary roots, there are infinitely many root spaces, and integrable highest weight modules typically have infinitely many weights. Moreover, $\End(V)$ is no longer completely reducible, and the classical arguments fail. We overcome these challenges by developing a new approach, centered on an extension of Schur's lemma that replaces complete reducibility and yields a sufficient condition for wideness. In the affine case, we establish a structural result on closed subsets of root systems that supports the proof of a necessary condition for wideness without requiring $\mathfrak{h}\subseteq \mathfrak{s}$.

The article is organized as follows. Section~\ref{background} reviews background material on symmetrizable Kac--Moody algebras, their regular subalgebras, and their representations. In Section~\ref{lwidesec}, we study wide regular subalgebras, prove a sufficient condition for wideness for Cartan-regular subalgebras $\mathfrak{h}\subseteq \mathfrak{s}$ via the extension of Schur's lemma, and establish, in the affine case, a necessary condition for wideness. In this section, we also present the affine finite-sum closure result that supports the proof of the necessary direction. In Section~\ref{conclusion}, we summarize our results and look forward to related open directions of research.
 
\section{Background on Kac--Moody algebras}\label{background}

In this section, we review the necessary background on symmetrizable
Kac--Moody algebras, their representations, root systems, and the
theory of closed subsets of root systems and regular subalgebras
(cf.~\cite{Bourbaki, CarboneCoelhoMurrayThurmanZhu2025, Carter, FeliksonRetakhTumarkin2008, HongKang, Kac90, KacRaina, Ku, MP,Naito1992}).

\subsection{Kac--Moody algebras}

Let $A=(a_{ij})$ be an $\ell\times \ell$ \emph{generalized Cartan matrix} (GCM), so that
\[
a_{ii}=2,\quad a_{ij}\in \mathbb{Z},\; a_{ij}\le 0 \ (i\neq j),\quad \text{and} \quad a_{ij}=0 \Rightarrow a_{ji}=0.
\]
The matrix $A$ is \emph{symmetrizable} if there exists a diagonal matrix
$D=\mathrm{diag}(d_i)$ with $d_i>0$ such that $DA$ is symmetric. 
In this article, all GCMs are  symmetrizable.

A \emph{realization} of $A$ is a triple $(\mathfrak{h},\Pi,\Pi^\vee)$, where
$\mathfrak{h}$ is a complex vector space and
\[
\Pi=\{\alpha_1,\dots,\alpha_\ell\}\subset \mathfrak{h}^*, 
\qquad
\Pi^\vee=\{\alpha_1^\vee,\dots,\alpha_\ell^\vee\}\subset \mathfrak{h}
\]
are linearly independent sets satisfying
\[
\alpha_j(\alpha_i^\vee)=a_{ij},
\qquad
\dim \mathfrak{h}= 2\ell-\operatorname{rank}(A).
\]
The elements of $\Pi$ and $\Pi^\vee$ are called the \emph{simple roots} and
\emph{simple coroots}, respectively.

The associated \emph{symmetrizable Kac--Moody algebra}
$\mathfrak{g}=\mathfrak{g}(A)$ is the Lie algebra over $\mathbb{C}$ generated by
$\mathfrak{h}$ together with elements $e_i,f_i$ ($1\le i\le \ell$),
called \emph{simple root vectors}, subject to the relations
\begin{align*}
[h,h']&=0,\\
[e_i,f_j]&=\delta_{ij}\,\alpha_i^\vee,\\
[h,e_i]&=\alpha_i(h)e_i,\qquad
[h,f_i]=-\,\alpha_i(h)f_i,\\
(\operatorname{ad} e_i)^{1-a_{ij}} e_j&=0,\qquad
(\operatorname{ad} f_i)^{1-a_{ij}} f_j=0 \quad (i\neq j),
\end{align*}
for all $h,h'\in \mathfrak{h}$. These are the \emph{Cartan}, \emph{Weyl}, and
\emph{Serre relations}. 

An indecomposable symmetrizable generalized Cartan matrix $A$ is said to be of \emph{finite type}, \emph{affine type}, or \emph{indefinite type} according to whether $A$ is positive definite, positive semidefinite of corank $1$, or neither. 
Equivalently, a Kac--Moody algebra $\mathfrak{g}$ is said to be of finite, affine, or indefinite type if its generalized Cartan matrix has the corresponding property.

Let $\Phi\subset \mathfrak{h}^*$ denote the corresponding root system of $\mathfrak{g}$.
Then $\mathfrak{g}$ admits the \emph{root space decomposition}
\[
\mathfrak{g}
=\mathfrak{h}\oplus \bigoplus_{\alpha\in\Phi}\mathfrak{g}_\alpha,
\qquad
\mathfrak{g}_\alpha
=\{x\in\mathfrak{g}\mid [h,x]=\alpha(h)x \ \text{for all } h\in\mathfrak{h}\}.
\]
The root system decomposes as
\[
\Phi=\Phi^+\cup \Phi^-,\qquad \Phi^-=-\Phi^+,
\]
where $\Phi^+$ consists of all roots that are nonnegative integer combinations of simple roots.
The generators $e_i$ and $f_i$ are root vectors in $\mathfrak{g}_{\alpha_i}$ 
and $\mathfrak{g}_{-\alpha_i}$, respectively.


For a symmetrizable Kac--Moody algebra $\mathfrak g$ with root system $\Phi$,
there exists an invariant symmetric bilinear form $(\cdot\mid\cdot)$ on $\mathfrak g$
whose restriction to $\mathfrak h$ is nondegenerate. Hence it induces a bilinear
form on $\mathfrak h^*$,
\begin{align*}
\mathfrak{h}^* \times \mathfrak{h}^* \rightarrow \mathbb{C},\quad
(\alpha,\beta) \mapsto (\alpha \mid \beta).
\end{align*}
Let $\nu:\mathfrak h\to \mathfrak h^*$ be the isomorphism induced by the
restriction of $(\cdot\mid\cdot)$ to $\mathfrak h$. Then
\[
\nu(\alpha_i^\vee)=\frac{2\alpha_i}{(\alpha_i\mid \alpha_i)},
\qquad\text{so that}\qquad
\alpha_j(\alpha_i^\vee)=\frac{2(\alpha_i\mid \alpha_j)}{(\alpha_i\mid \alpha_i)}=a_{ij}.
\]

\subsection{Representation theory of Kac--Moody algebras}
 A module $V$ of a Kac--Moody algebra is a \emph{weight module} if there is a weight space decomposition 
\[
V=\bigoplus_{\mu\in \mathfrak{h}^*} V_\mu,~ \text{where}~ V_\mu=\{v\in V  \mid h v=\mu(h)v, ~\text{for all}~ h\in \mathfrak{h}\}.
\]
If $V_\mu\ne \{0\}$, then $\mu$ is a \emph{weight} of $V$. The set of weights of $V$ is denoted $\mathrm{wts}(V)$.
A weight module $V$ is a \emph{highest weight module} of highest weight $\lambda\in \mathfrak{h}^*$ if there is a nonzero vector $v_\lambda\in V$, called the \emph{highest weight vector}, for which
\begin{align*}
    e_i v_\lambda &= 0 \quad \text{for all} \quad 1\le i\le \ell,\\
    h v_\lambda &= \lambda(h) v_\lambda \quad \text{for all}\quad h\in \mathfrak{h}, \quad \text{and}\\
    V&= U(\mathfrak{g})v_\lambda,
\end{align*}
where $U(\mathfrak{g})$ is the  universal enveloping algebra of $\mathfrak{g}$.
Such a module is \emph{irreducible} if it has no proper submodules. A module  is called \emph{indecomposable} if it cannot be
written as a direct sum of two nonzero submodules.

An element $x\in \mathfrak{g}$ is \emph{locally nilpotent} on a weight module $V$ if for any $v\in V$ there is a positive integer $N$ such that $x^N\cdot v=0$.
A weight module $V$ is \emph{integrable} if all $e_i$ and $f_i$, $1\le i \le \ell$, are locally nilpotent on $V$. 

This paper focuses on the \emph{irreducible integrable highest weight 
modules} of symmetrizable Kac--Moody algebras. Given an irreducible 
integrable highest weight module $V$, we may endow the endomorphism 
space $\End(V)$ with a $\mathfrak{g}$-module structure via
\[
x \cdot F = x \circ F - F \circ x, \qquad x \in \mathfrak{g},
\]
where the right-hand side denotes the commutator in $\End(V)$.

Note that $\End(V)$ is not necessarily completely reducible, 
which is fundamentally different from the semisimple setting, 
where $\End(V)$ \emph{is} completely reducible for every 
finite-dimensional irreducible module. 
This distinction is a considerable obstacle to extending the 
results of~\cite{DouglasRepka2025} from the semisimple setting 
to the symmetrizable Kac--Moody setting.

\subsection{Root systems: Real and imaginary roots}

The real roots $\Phi_{\re}$ and imaginary roots $\Phi_{\im}$ are given by
\[
\Phi_{\re}=\{\alpha\in \Phi \mid (\alpha \mid \alpha)>0 \},\qquad
\Phi_{\im}=\{\alpha \in \Phi \mid (\alpha \mid \alpha)\le 0 \}.
\]
An imaginary root $\alpha$ such that $(\alpha \mid \alpha)=0$ is called \emph{isotropic}.
If $\alpha \in \Phi_{\re}$, then $\dim(\mathfrak{g}_{\alpha})=1$.
If $\alpha \in \Phi_{\im}$, then the root space $\mathfrak{g}_{\alpha}$ is finite-dimensional (cf.~\cite{Kac90}).

In finite type, all roots are real, whereas in affine and indefinite types there exist infinitely many imaginary roots. In affine type, there exists a distinguished positive imaginary root $\delta$, called the \emph{null root}, characterized by
\[
(\delta\mid \delta)=0,
\]
and the property that every imaginary root is an integer multiple of $\delta$, that is,
\[
\Phi_{\im}=\{k\delta \mid k\in \mathbb{Z}\setminus\{0\}\}.
\]

Fix $\beta \in \Phi^+$, and define
\[
\mathfrak{n}_{\pm}^{(\beta)}
=
\bigoplus_{j\geq 1} \mathfrak{g}_{\pm j\beta},
\qquad
\mathfrak{g}^{(\beta)}
=
\mathfrak{n}_{-}^{(\beta)}
\oplus
\mathbb{C}\nu^{-1}(\beta)
\oplus
\mathfrak{n}_{+}^{(\beta)}.
\]
Then $\mathfrak{g}^{(\beta)}$ is a subalgebra of $\mathfrak{g}$.
If $\beta \in \Phi_{\re}$, then $\mathfrak{g}^{(\beta)}\cong \mathfrak{sl}_2$
and is the $\mathfrak{sl}_2$-subalgebra corresponding to the real root $\beta$.
If $\beta \in \Phi_{\im}$, then $\mathfrak{g}^{(\beta)}$ is infinite-dimensional
(see \cite[\S 11.9]{Kac90}).

Let $\alpha\in \Phi_{\re}$ and let $\beta\in \Phi$, with $\alpha$ and $\beta$ non-proportional. Then there exist
nonnegative integers $p$ and $q$ such that
\[
p-q=\beta(\alpha^\vee),
\]
and
\[
\beta+k\alpha\in \Phi
\qquad\Longleftrightarrow\qquad
-p\le k\le q,\quad k\in\mathbb Z
\]
(cf.~ \cite[Proposition 5.1]{Kac90}). In this case, the sequence
\[
\beta-p\alpha,\ \ldots,\ \beta-\alpha,\ \beta,\ \beta+\alpha,\ \ldots,\ \beta+q\alpha
\]
is called the \emph{$\alpha$-string through $\beta$}.

\subsection{Closed subsets of root systems and regular subalgebras}

Let $\Phi$ be the root system of $\mathfrak g$ with respect to a Cartan
subalgebra $\mathfrak h$. A subset $T\subseteq\Phi$ is \emph{closed} if
$\alpha,\beta\in T$ and $\alpha+\beta\in\Phi$ imply $\alpha+\beta\in T$.
For any $S\subseteq\Phi$, the \emph{closure of $S$}, denoted $[S]$, is defined by 
\[
[S]\coloneqq \text{the smallest closed subset of $\Phi$ containing $S$}.
\]

A subalgebra $\mathfrak{s}\subseteq\mathfrak{g}$ is called \emph{regular}
if it is normalized by the Cartan subalgebra $\mathfrak{h}$. In this
case, $\mathfrak{s}$ admits a root space decomposition
\[
\mathfrak{s}
=
(\mathfrak{s}\cap \mathfrak{h})
\oplus
\bigoplus_{\alpha\in \Phi} (\mathfrak{s}\cap \mathfrak{g}_\alpha).
\]
Accordingly, we associate to $\mathfrak{s}$ a subset $T\subseteq \Phi$
defined by
\[
\alpha\in T \quad \Longleftrightarrow \quad
\mathfrak{s}\cap \mathfrak{g}_\alpha \neq \{0\}.
\]
It is straightforward to verify that $T$ is closed.

For real roots $\alpha\in\Phi_{\re}$, the root space
$\mathfrak{g}_\alpha$ is one-dimensional, and hence
\[
\mathfrak{s}\cap \mathfrak{g}_\alpha
=
\{0\}
\quad \text{or} \quad
\mathfrak{g}_\alpha.
\]
Thus, on real roots, the subset $T$ records exactly which root spaces
are contained in $\mathfrak{s}$.

\begin{remark}[Regular subalgebras of semisimple Lie algebras]
In the finite-dimensional semisimple Lie algebra case, one may define a regular subalgebra $\mathfrak{s}$ (normalized by the Cartan subalgebra $\mathfrak{h}$) as one that can be written as 
\begin{equation}\label{semireg}
\mathfrak{s}=\mathfrak{t} \oplus \bigoplus_{\alpha\in T} \mathfrak{g}_\alpha,
\end{equation}
for some closed subset $T\subseteq \Phi$, and subalgebra $\mathfrak{t}\subseteq \mathfrak{h}$, such that
$[\mathfrak{g}_\alpha, \mathfrak{g}_{-\alpha}]\subseteq \mathfrak{t}$ whenever 
both $\alpha$ and $-\alpha$ are in $T$. In the semisimple case,  this is equivalent to requiring $\mathfrak{s}$ to be normalized by a Cartan subalgebra. 

However, in the symmetrizable Kac--Moody setting, the two notions don't necessarily coincide. A subalgebra of the form in Eq. \eqref{semireg} is regular, but a regular subalgebra is not necessarily in this form. In particular, for an imaginary root $\beta$, a regular subalgebra $\mathfrak{s}$ may satisfy
$\{0\} \subsetneq (\mathfrak{s}\cap \mathfrak{g}_\beta)  \subsetneq \mathfrak{g}_\beta$, which cannot happen in the finite-dimensional semisimple case, for which imaginary roots don't exist.  \unskip\hfill$\diamond$
\end{remark}

A subset $S \subseteq \Phi$ is called \emph{symmetric} if
\[
\alpha \in S \quad \Longrightarrow \quad -\alpha \in S.
\]
Equivalently, $S = -S$.

In addition to closed subsets of $\Phi$, we will also require the notion
of real closed subsets  and closure within the real roots $\Phi_{\re}$. A subset
$T\subseteq\Phi_{\re}$ is \emph{real closed} if $\alpha,\beta\in T$ and
$\alpha+\beta\in\Phi_{\re}$ imply $\alpha+\beta\in T$
(cf.~\cite{HabibKusVenkatesh2025}). For $S\subseteq\Phi$, the
\emph{real closure} is defined by
\[
\operatorname{cl}_{\re}(S)
\coloneqq
\text{the smallest real closed subset of $\Phi_{\re}$ containing
$S\cap\Phi_{\re}$}.
\]
In particular, the definition implies  $\operatorname{cl}_{\re}(S)
=
\operatorname{cl}_{\re}(S\cap \Phi_{\re})$, and  $\operatorname{cl}_{\re}(S)\subseteq \Phi_{\re}$.

\section{Wide subalgebras of symmetrizable Kac--Moody algebras}\label{lwidesec}

In this section, we establish the main results of the paper. We first prove a sufficient condition for wideness (Theorem~\ref{lwide}) for regular subalgebras containing a Cartan subalgebra, based on an extension of Schur's lemma (Theorem~\ref{nzero}). We then establish, in the affine case, a necessary condition for wideness (Theorem~\ref{affine-necessary}), which notably does not require the assumption that $\mathfrak{h}\subseteq \mathfrak{s}$. The proofs rely on three key ingredients: the Idempotent Criterion Lemma (Lemma~\ref{newidempotentlemma}), the extension of Schur's lemma, and the result on the closure of finite sums of roots in the affine case (Proposition \ref{affine-necessary}). The first result is well known in the semisimple setting and extends naturally to symmetrizable Kac--Moody algebras; we include a proof for completeness.

\begin{lemma}[Idempotent criterion lemma]\label{newidempotentlemma}
Let $\mathfrak{s}$ be a subalgebra of a symmetrizable Kac--Moody algebra 
$\mathfrak{g}$, and let $V$ be an irreducible integrable highest weight module.
Then $V$ is $\mathfrak{s}$-indecomposable if and only if 
$(\End \, V)^{\mathfrak{s}}$ contains no non-trivial idempotents.
\end{lemma}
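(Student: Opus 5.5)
The proof is the standard correspondence between direct sum decompositions of a module and idempotents in its endomorphism algebra. The first thing to note is what $(\End V)^{\mathfrak{s}}$ is. By the definition of the $\mathfrak{g}$-action on $\End(V)$, an element $F$ is killed by every $x\in\mathfrak{s}$ exactly when $x\circ F=F\circ x$ for all $x\in\mathfrak{s}$. Hence $(\End V)^{\mathfrak{s}}$ is precisely $\End_{\mathfrak{s}}(V)$, the associative algebra of $\mathfrak{s}$-module endomorphisms of $V$. I take ``non-trivial idempotent'' to mean an idempotent $P$ with $P\neq 0$ and $P\neq \Id_V$. Nothing about integrability or the Kac--Moody structure will be needed; the argument works for any module over any Lie algebra, and I will say so.

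For the forward direction, I argue by contraposition. Suppose $P\in(\End V)^{\mathfrak{s}}$ is an idempotent with $P\neq 0,\Id_V$. Set $V_1=P(V)$ and $V_2=(\Id_V-P)(V)=\ker P$.
\begin{itemize}
\item Since $P$ commutes with every $x\in\mathfrak{s}$, so does $\Id_V-P$. Therefore both $V_1$ and $V_2$ are $\mathfrak{s}$-submodules; for instance $x\cdot Pv=P(x\cdot v)\in V_1$.
\item Every $v$ can be written as $v=Pv+(v-Pv)$, so $V=V_1+V_2$.
\item If $w\in V_1\cap V_2$, then $w=Pu$ for some $u$, and $Pw=0$. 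Idempotence gives $w=Pu=P^2u=Pw=0$.
\item Finally, $V_1\neq 0$ because $P\neq 0$, and $V_2\neq 0$ because $P\neq\Id_V$.
\end{itemize}
Thus $V=V_1\oplus V_2$ is a decomposition into two nonzero $\mathfrak{s}$-submodules, and $V$ is $\mathfrak{s}$-decomposable.

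For the converse, again by contraposition, suppose $V=V_1\oplus V_2$ with $V_1,V_2$ nonzero $\mathfrak{s}$-submodules. Let $P$ be the projection onto $V_1$ along $V_2$, so $P(v_1+v_2)=v_1$. Then $P^2=P$, and $P$ is neither $0$ nor $\Id_V$, since $V_1\neq 0$ and $V_2\neq 0$. To see that $P$ is $\mathfrak{s}$-equivariant, take $x\in\mathfrak{s}$. Both summands are $x$-stable, so $x\cdot(v_1+v_2)=x\cdot v_1+x\cdot v_2$ with $x\cdot v_i\in V_i$. Hence $P(x\cdot v)=x\cdot v_1=x\cdot P(v)$. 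So $P$ is a non-trivial idempotent in $(\End V)^{\mathfrak{s}}$.

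I do not expect any real obstacle. The one point worth flagging is that no topology or completeness on $\End(V)$ is needed. Even though $V$ is infinite-dimensional and $\End(V)$ is not completely reducible as a $\mathfrak{g}$-module, the argument uses only the algebraic direct sum, and every projection involved is an honest linear endomorphism of $V$.
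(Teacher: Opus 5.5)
Your proof is correct and follows essentially the same route as the paper: the image/kernel decomposition of an $\mathfrak{s}$-equivariant idempotent for one direction, and the projection onto a summand for the other. The only cosmetic difference is that you argue the forward direction by contraposition, using $V_2=(\Id_V-P)(V)$ to get $V_2\neq 0$. The paper instead assumes indecomposability and deduces $F=0$ or $F=\Id_V$ directly.
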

\begin{proof}
($\Longrightarrow$)
Assume $V$ is $\mathfrak{s}$-indecomposable.
Let $F \in (\End\, V)^{\mathfrak{s}}$ be idempotent, so that $F^2 = F$. Since $x \cdot F = x \circ F - F \circ x = 0$ for all $x \in \mathfrak{s}$,
we have $F \circ x = x \circ F$, that is, $F$ commutes with every element 
of $\mathfrak{s}$. 

We then have a vector space decomposition
\[
V = \operatorname{im}(F) \oplus \ker(F).
\]
Indeed, let  $v\in V$, then $v=F(v)+(v-F(v))$, where $F(v)\in \operatorname{im}(F)$, and $v-F(v)\in \ker(F)$, since $F$ is idempotent. This implies $V=\operatorname{im}(F)+\ker(F)$. Further, if $v\in \operatorname{im}(F) \cap \ker(F)$, then $F(v)=0$ and $F(u)=v$ for some $u\in V$. Since $F$ is idempotent, we have $v=F(u)=F^2(u)=F(v)=0$, so that $\operatorname{im}(F) \cap \ker(F)=\{0\}$. Thus $V = \operatorname{im}(F) \oplus \ker(F)$ as vector spaces.

Further, the sum is direct as $\mathfrak{s}$-submodules: if $F(v)=0$ and $x\in\mathfrak{s}$, 
then $F(xv)=xF(v)=0$, so $\ker(F)$ is an $\mathfrak{s}$-submodule; and if $v=F(u)$, then 
$xv = xF(u)=F(xu)\in\operatorname{im}(F)$, so $\operatorname{im}(F)$ is an $\mathfrak{s}$-submodule. 

Indecomposability then implies that one of these summands must be $\{0\}$.
If $\operatorname{im}(F) = \{0\}$, then $F = 0$.
If $\ker(F) = \{0\}$, then the decomposition
$V = \ker(F) \oplus \operatorname{im}(F)$
gives $\operatorname{im}(F) = V$ directly, so $F$ is surjective. Combined with 
$\ker(F)=\{0\}$, $F$ is bijective, and multiplying $F^2 = F$ by $F^{-1}$ 
yields $F = \mathrm{Id}_V$. Thus $F$ is a trivial idempotent.

\vspace{4pt}
\noindent
($\Longleftarrow$)
Suppose $V$ is $\mathfrak{s}$-decomposable.
Then there exists a decomposition
\[
V = U \oplus W
\]
with $U$ and $W$ nonzero $\mathfrak{s}$-submodules.
Let $F:V\to V$ be the projection onto $U$ along $W$, that is
$F(u+w)=u$ for $u\in U$, $w\in W$.
Then $F^2=F$ and $F\notin\{0,\mathrm{Id}_V\}$.
Moreover, for $x\in\mathfrak{s}$ and $v=u+w\in V$, with $u\in U$ and $w\in W$,
\[
F(xv)
=
F(xu+xw)
=
xu
=
xF(v),
\]
since $U$ and $W$ are $\mathfrak{s}$-submodules, so $x\cdot F = x\circ F - F\circ x = 0$, so that $F\in(\End\, V)^{\mathfrak{s}}$. Thus $F$ is a non-trivial idempotent in  $(\End\, V)^{\mathfrak{s}}$.
\end{proof}

For our next result, we extend Schur's lemma from a symmetrizable Kac--Moody algebra to certain regular subalgebras.

\begin{theorem}[Schur-type rigidity for regular subalgebras]\label{nzero}
Let $\mathfrak{g}$ be a symmetrizable Kac--Moody algebra with Cartan
subalgebra $\mathfrak{h}$ and root system $\Phi$, and let $T\subseteq\Phi$
be the closed subset associated with a regular subalgebra $\mathfrak{s}$, such that $
\mathfrak{h}\subseteq \mathfrak{s}$. Let $V$ be an irreducible integrable highest weight $\mathfrak{g}$-module. If
\[
\Pi \subseteq \operatorname{cl}_{\re}(T\cup(-T)),
\]
then
\[
(\End V)^{\mathfrak{s}}
=
\mathbb{C}\Id_V .
\]
\end{theorem}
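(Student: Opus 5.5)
Let $F\in(\End V)^{\mathfrak{s}}$. The plan is to show that $F$ commutes with every $e_i$ and $f_i$. Since $\mathfrak{h}\subseteq\mathfrak{s}$, $F$ then commutes with all of $\mathfrak{g}$, and it remains to prove a Schur lemma for $\mathfrak{g}$ on $V$.

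\emph{Step 1 (weight preservation).} Since $\mathfrak{h}\subseteq\mathfrak{s}$, $F$ commutes with $\mathfrak{h}$, so $F(V_\mu)\subseteq V_\mu$ for every weight $\mu$.

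\emph{Step 2 (the commutant is closed under real closure).} Let
\[
R=\{\alpha\in\Phi_{\re}\mid [F,x]=0 \text{ for all } x\in\mathfrak{g}_\alpha\}.
\]
Because $\mathfrak{g}_\alpha$ is one-dimensional for real $\alpha$, we have $(T\cap\Phi_{\re})\subseteq R$. We then show two closure properties.
\begin{itemize}
\item[(a)] $R$ is real closed. If $\alpha,\beta\in R$ and $\alpha+\beta\in\Phi_{\re}$, then $[\mathfrak{g}_\alpha,\mathfrak{g}_\beta]=\mathfrak{g}_{\alpha+\beta}$; this is the standard fact that for real roots with $\alpha+\beta$ a root, the bracket is nonzero, proved via $\mathfrak{sl}_2$-string theory applied to the real root $\alpha$. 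By the Jacobi identity, $F$ commutes with the bracket, so $\alpha+\beta\in R$.
\item[(b)] $R$ is symmetric. If $\alpha\in R$, we must show $-\alpha\in R$. Take $e\in\mathfrak{g}_\alpha$ and $f\in\mathfrak{g}_{-\alpha}$ with $h=[e,f]=\alpha^\vee$. Since $V$ is integrable, it decomposes as a direct sum of finite-dimensional $\mathfrak{sl}_2$-modules for $\langle e,f,h\rangle$ (every real root vector is locally nilpotent). The operator $F$ commutes with $e$ and $h$.
\end{itemize}

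The obstacle is to deduce from this that $F$ commutes with $f$. The plan is as follows. On each $h$-eigenspace, $F$ preserves $\ker e$, so it preserves the space of $\mathfrak{sl}_2$-highest weight vectors of each weight $m$. Write $[F,f]=G$. Then $[e,G]=[[e,F],f]+[F,[e,f]]=0+[F,h]=0$, and $G$ shifts $h$-weight by $-2$. Applying $e$ repeatedly, $G$ maps $\ker e$ of weight $m$ into $\ker e$ of weight $m-2$. Any vector of $\ker e$ has nonnegative weight, and $e$ is injective on vectors of negative weight. Using this, we show $G$ vanishes on highest weight vectors: a highest weight vector of weight $m$ generates $f^k v$, and the vector $Gv$ lies in $\ker e$ at weight $m-2$. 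Commutation with $e$ forces $e^{m}f^{m}Gv$-type relations that contradict finite-dimensionality unless $Gv=0$. More cleanly, consider $\mathrm{ad}$ on $\End V$ restricted to the locally finite part: $F$ is an $\mathfrak{sl}_2$-vector annihilated by $e$ and of $h$-weight $0$ in a locally finite module, hence killed by $f$. I would make this precise by checking that $F$ generates a locally finite $\mathfrak{sl}_2$-submodule of $\End V$, using weight-space finiteness and integrability. This is the step I expect to be hardest.

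Given (a) and (b), $R$ is real closed and contains $(T\cup(-T))\cap\Phi_{\re}$, hence contains $\operatorname{cl}_{\re}(T\cup(-T))\supseteq\Pi$. So $F$ commutes with every $e_i$, and by symmetry of $R$, also with every $f_i$. Hence $F$ commutes with all of $\mathfrak{g}$.

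\emph{Step 3 (Schur).} Let $v_\lambda$ be a highest weight vector. By Step 1 and $\dim V_\lambda=1$, $F v_\lambda=c\,v_\lambda$. Since $V=U(\mathfrak{g})v_\lambda$ and $F$ commutes with $U(\mathfrak{g})$, we get $F=c\,\Id_V$.
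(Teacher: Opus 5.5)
Your outline is the paper's. Your (a) is its Case~2, propagating through the real closure via $[\mathfrak{g}_\gamma,\mathfrak{g}_\nu]=\mathfrak{g}_{\gamma+\nu}$. Your (b) is its Case~1 in mirror form: the paper assumes $F$ commutes with $e_{-\alpha}$ and derives commutation with $e_\alpha$. Your Step~3 proves the needed Schur lemma directly instead of citing Kac, and packaging everything in the set $R$ is tidier than the paper's level-by-level recursion.

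The gap is that (b), the only nontrivial step, is not proved. Your first sketch starts at the wrong end of the string. For an $\mathfrak{sl}_2$-highest weight vector $v$ of weight $m\ge 2$, the vector $Gv$ lies in $\ker e$ at weight $m-2\ge 0$, which is perfectly consistent, so nothing is forced there; the ``$e^mf^mGv$-type relations'' are never specified. Your second sketch assumes that $F$ lies in a locally finite $\mathfrak{sl}_2$-submodule of $\End V$, which is exactly the point at issue. Since $\End V$ is not locally finite in general, this cannot be taken for granted.

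The missing step is short. Keep $G=[F,f]$, so $[e,G]=0$ and $[h,G]=-2G$, but evaluate $G$ on a \emph{lowest} weight vector $u$ of an irreducible summand $W(m)\subseteq V$, so $hu=-mu$ and $e^{m+1}u=0$.
\begin{itemize}
\item Then $e^{m+1}Gu=Ge^{m+1}u=0$, whereas $Gu$ has $h$-weight $-(m+2)$.
\item In a direct sum of finite-dimensional irreducible $\mathfrak{sl}_2$-modules, $e^{m+1}$ is injective on the weight $-(m+2)$ space. Indeed, such a vector in $W(n)$ is a multiple of $w_i$ with $i=(n+m+2)/2\ge m+2$, and $e^{m+1}w_i$ is a nonzero multiple of $w_{i-m-1}$.
\item Hence $Gu=0$, and $G(e^ju)=e^jGu=0$ for all $j$. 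So $G$ vanishes on every summand, i.e.\ $G=0$.
\end{itemize}
This is the mirror of the paper's argument with $w_0$ and $e_{-\alpha}^{n+2}e_\alpha F(w_0)$, followed by its induction on $m$.

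Alternatively, your local-finiteness idea can be made rigorous factorwise. For $V=\bigoplus_i W_i$ one has $\End V\cong\prod_i\bigoplus_j\operatorname{Hom}(W_i,W_j)$, with $\mathfrak{sl}_2$ acting on each finite-dimensional factor separately. Every component of $F$ is then an $\operatorname{ad}e$-invariant of weight $0$ in a finite-dimensional module, hence is killed by $\operatorname{ad}f$. With either repair your proof is complete.
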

\begin{proof}
    By Schur's Lemma \cite[Lemma 9.3]{Kac90}, $(\End V )^{\mathfrak{g}} =\mathbb{C} \Id_{V}$. Hence, it suffices to show  $(\End V)^{\mathfrak{s}} = (\End V)^{\mathfrak{g}}$. 
Since we necessarily have $(\End V )^{\mathfrak{g}}\subseteq (\End V)^{\mathfrak{s}}$, we need to establish $(\End V)^{\mathfrak{s}} \subseteq (\End V)^{\mathfrak{g}}$. 

Towards this end, let $F\in (\End V)^{\mathfrak{s}}$. We aim to show that
$F\in(\End V)^{\mathfrak{g}}$, that is, $\mathfrak{g}\cdot F=0$. Since
$\mathfrak{h}\subseteq \mathfrak{s}$, it follows that $\mathfrak{h}\cdot F=0$.
Since $T \cup (-T)$ is symmetric, its real closure $\operatorname{cl}_{\re}(T \cup (-T))$ is also symmetric. Therefore, if $\alpha \in \Pi \subseteq \operatorname{cl}_{\re}(T \cup (-T))$, then $-\alpha \in \operatorname{cl}_{\re}(T \cup (-T))$, and hence $\pm \Pi \subseteq \operatorname{cl}_{\re}(T \cup (-T))$. 

Since $\mathfrak{g}$
is generated by $\mathfrak{h}$ together with the simple root spaces
$\mathfrak{g}_{\pm\alpha}$ ($\alpha\in\Pi$), it suffices to show that
\[
\mathfrak{g}_\alpha \cdot F=0
\qquad
\text{for all } \alpha \in \operatorname{cl}_{\re}(T\cup(-T)),
\]
which we establish in two cases.

\vspace{0.7em}

\noindent \underline{Case 1. $\alpha \in (T \cup (-T))\cap \Phi_{\re}$}\,:
If $\alpha\in T\cap \Phi_{\re}$, then 
$\mathfrak{g}_\alpha \cdot F=0$, since $\mathfrak{g}_{\alpha}\subseteq \mathfrak{s}$ (by definition of $T$) and $F\in (\End V)^{\mathfrak{s}}$.
Now suppose $\alpha \in (-T) \cap \Phi_{\re}$, so that $-\alpha\in T\cap \Phi_{\re}$, and $\mathfrak{g}_{-\alpha}\cdot F=0$. Let $e_\alpha\in\mathfrak{g}_\alpha$ and $e_{-\alpha}\in\mathfrak{g}_{-\alpha}$ be chosen such that
$\alpha^\vee=[e_\alpha,e_{-\alpha}]$,  to form the $\mathfrak{sl}_2$-triple 
\[\mathfrak{s}_\alpha \coloneqq\langle e_{\alpha}, \,e_{-\alpha},\, \alpha^\vee \rangle \cong \mathfrak{sl}_2.\] 

As $V$ is integrable, both
$e_\alpha$ and $e_{-\alpha}$ act locally nilpotently on $V$. In particular, 
$V$ decomposes as a direct sum of finite-dimensional irreducible
$\mathfrak{sl}_2$-modules \cite[Proposition 3.6]{Kac90}.

Let $W(n)$ be an arbitrary $\mathfrak{s}_\alpha$-module of highest weight $n\in \mathbb{Z}_{\ge 0}$ in this decomposition. To show 
$e_\alpha\cdot F=0$ (and hence $\mathfrak{g}_\alpha \cdot F=0$), it suffices to show $e_\alpha F(w) = F(e_\alpha w)$ for an arbitrary basis element $w\in W(n)$.
The $\mathfrak{s}_\alpha$-module $W(n)$ is $(n+1)$-dimensional, and has a basis 
\[
w_0,\,w_1,\cdots,\,w_n,
\]
satisfying \cite[Lemma 7.2]{humphreys}
\begin{equation}\label{humph}
\begin{aligned}
    \alpha^\vee ~w_i &= (n-2i)~ w_i \quad (0\le i\le n);\\
    e_{-\alpha} ~w_i &= (i+1) ~w_{i+1} \quad (0\le i\le n-1), \quad\ e_{-\alpha}~ w_n=0; \;\,\text{and}\\
    e_{\alpha} ~w_i &= (n-i+1)~ w_{i-1}\quad  (1\le i\le n), \quad e_\alpha~ w_0=0.
\end{aligned}
\end{equation}
We will show $e_{\alpha}F(w_m)= F(e_\alpha w_m)$ for all $0\le m \le n$. We proceed by induction on $m$.

First observe that if $F(w_0)=0$, then $F(w_m)=0$ for all $0\le m \le n$, since 
\[
m!\, F(w_m)=F(e_{-\alpha}^m w_0)= e_{-\alpha}^m F(w_0)=0.\] 

Moreover, if $F(w_0)\ne 0$, then $F(w_m)\ne 0$ for all $0\le m \le n$. This follows since, if nonzero, \(F(w_0)\) would be a weight vector for the
\(\mathfrak{s}_\alpha\)-action of weight \(n\). Hence, by \(\mathfrak{sl}_2\)-theory and the fact
that \(V\) decomposes into a direct sum of finite-dimensional irreducible
\(\mathfrak{s}_\alpha\)-modules, we obtain
\[
0 \neq e_{-\alpha}^m F(w_0)
= F(e_{-\alpha}^m w_0)
= m!\,F(w_m)
\qquad (0 \le m \le n).
\]

Hence,  we may assume $F(w_m)\ne 0$ ($0\le m \le n$), for otherwise $F(w_m)=0$ ($0\le m \le n$), and the result holds trivially. 

\vspace{0.7em} 

\noindent \underline{Base cases. $m=0, 1$}~: We have $e_\alpha w_0=0$, so that $F(e_\alpha w_0)=0$. Hence, since $F$ preserves weights (that is, $\mathfrak{h}\cdot F=0$, by assumption) and commutes with $e_{-\alpha}$,  we have $e_\alpha F(w_0)=0$, as required.

Indeed, if \(e_\alpha F(w_0)\neq 0\), then it would be a vector of
\(\alpha^\vee\)-weight \(n+2\). By the same \(\mathfrak{sl}_2\)-argument as above, this would imply
\[
e_{-\alpha}^{\,n+2} \, e_\alpha F(w_0) \neq 0.
\]
However, this is impossible, since \(F\) commutes with \(e_{-\alpha}\) and
\(e_{-\alpha}^{\,n+1} w_0 = 0\).

Now we will show that $e_\alpha F(w_1)=F(e_\alpha w_1)$. Since
$F(e_\alpha w_1)=n F(w_0)$ by Eq. 
\eqref{humph}, we must show  $e_\alpha F(w_1)$ also equals $nF(w_0)$:
\begin{align*}
    e_\alpha F(w_1) &= e_\alpha F(e_{-\alpha} w_0)\\
    &= e_\alpha e_{-\alpha} F(w_0)\\
    &= \alpha^\vee F(w_0) +\cancelto{0}{e_{-\alpha}\, e_{\alpha}\, F(w_0)}\\
    &= nF(w_0),
\end{align*}
as required. Thus, we have established
\[
e_\alpha F(w_0)=F(e_\alpha w_0) \quad \text{and}\quad e_\alpha F(w_1)=F(e_\alpha w_1).
\]

\vspace{0.7em} 

\noindent \underline{Induction Step. $m \rightarrow m+1$}~: Assume that $e_{\alpha} F(w_m)=F(e_{\alpha} w_m)$, for $1\le m < n$. Then
\begin{align*}
    &~ e_{-\alpha} e_{\alpha} F(w_m) = e_{-\alpha} F(e_{\alpha} w_m)\\
    \Longrightarrow &~ -\alpha^\vee F(w_m) +e_\alpha e_{-\alpha} F(w_m)= F(e_{-\alpha} e_\alpha w_m) \quad \text{(since $[e_\alpha, e_{-\alpha}]=\alpha^\vee$)}\\
     \Longrightarrow &~ -F(\alpha^\vee w_m) +e_\alpha F(e_{-\alpha} w_m) =
     F(-\alpha^\vee w_m +e_{\alpha} e_{-\alpha} w_m) \\
 \Longrightarrow &~ -(n-2m)F(w_m) +(m+1) e_\alpha F( w_{m+1}) \\
 &~ = -(n-2m)F(w_m) + (m+1)F(e_\alpha w_{m+1}) \quad \text{(by Eq.\eqref{humph})} \\
 \Longrightarrow &~  e_\alpha F( w_{m+1}) 
 = F(e_\alpha w_{m+1}),\quad \text{as required}.
\end{align*}

Hence, in Case 1, we have established that 
\[
\mathfrak{g}_\alpha \cdot F=0,\quad \text{for all}\quad \alpha \in (T \cup (-T))\cap \Phi_{\re}.
\]

\vspace{0.7em}

\noindent \underline{Case 2. $\alpha \in \operatorname{cl}_{\re}(T\cup(-T)) \subseteq \Phi_{\re}$}\,: First, we  construct  $\operatorname{cl}_{\re}(T\cup(-T))$ recursively as follows. Define 
\begin{align*}
\operatorname{cl}_{\re}(T\cup(-T))_0 &= (T \cup (-T))\cap \Phi_{\re},\\
\operatorname{cl}_{\re}(T\cup(-T))_1 &= \{\alpha+\beta \in \Phi_{\re} \setminus \operatorname{cl}_{\re}(T\cup(-T))_0 ~|~  \alpha,\, \beta \in \operatorname{cl}_{\re}(T\cup(-T))_0  \},\\
\operatorname{cl}_{\re}(T\cup(-T))_n&= \{\alpha+\beta \in \Phi_{\re} \setminus \cup_{i=0}^{n-1}\operatorname{cl}_{\re}(T\cup(-T))_i ~|~  \alpha,\, \beta \in \cup_{i=0}^{n-1} \operatorname{cl}_{\re}(T\cup(-T))_i  \},
 \end{align*}
for $n>1$. Then,
\[
\operatorname{cl}_{\re}(T\cup(-T)) = \cup_{i=0}^\infty \operatorname{cl}_{\re}(T\cup(-T))_i.
\]

Having constructed $\operatorname{cl}_{\re}(T\cup(-T))$, we now show $\mathfrak{g}_{\alpha} \cdot F =0$ for all $\alpha \in \operatorname{cl}_{\re}(T\cup(-T))$ by induction on the level $n$. If $\alpha \in \operatorname{cl}_{\re}(T\cup(-T))_0  = (T \cup (-T))\cap \Phi_{\re}$, then
we have already shown $\mathfrak{g}_{\alpha} \cdot F =0$ in Case 1, hence the base case is established. 

Assume $\mathfrak{g}_{\beta} \cdot F =0$ for all $\beta \in \operatorname{cl}_{\re}(T\cup(-T))_i$, $i \leq n-1$, and consider $\alpha \in \operatorname{cl}_{\re}(T\cup(-T))_n$. 
We must have $\alpha=\gamma+\nu$ for some $\gamma,\, \nu \in \cup_{i=0}^{n-1} \operatorname{cl}_{\re}(T\cup(-T))_i$. Since $\gamma,\nu$, $\gamma+\nu \in \Phi_{\re}$, we have
\[
[\mathfrak{g}_{\gamma}, \mathfrak{g}_{\nu}] = \mathfrak{g}_{\gamma+\nu}.
\]
Hence,
\[
\mathfrak{g}_{\gamma+\nu} \cdot F
=
[\mathfrak{g}_{\gamma}, \mathfrak{g}_{\nu}] \cdot F
=
\mathfrak{g}_{\gamma} \cdot (\mathfrak{g}_{\nu}\cdot F)
-
\mathfrak{g}_{\nu} \cdot (\mathfrak{g}_{\gamma}\cdot F)
=0,
\]
since $\mathfrak{g}_{\gamma} \cdot F =0$ and $\mathfrak{g}_{\nu} \cdot F =0$ by the induction hypothesis.

\vspace{2mm}

Hence, the above two cases establish that $\mathfrak{g}_\alpha \cdot F =0$, for all $\alpha\in \operatorname{cl}_{\re}(T\cup(-T))$, as required.
\end{proof}

\begin{remark}[The zero weight space of $\End(V)$]
Theorem \ref{nzero} assumes that $\mathfrak{h} \subseteq \mathfrak{s}$. 
If we drop this hypothesis, while keeping the others, then we still obtain
\[
(\End V)^{\mathfrak{s}}_0
=
\mathbb{C}\Id_V .
\]
Indeed, this follows by applying Theorem \ref{nzero} to $\mathfrak{s}+\mathfrak{h}$, together with the observation that
$
(\End V)^{\mathfrak{s}}_0 = (\End V)^{\mathfrak{s} + \mathfrak{h}}$.
\unskip\hfill$\diamond$
\end{remark}

We now state and prove one of our  main theorems, which adapts the sufficient condition of Corollary~3.10 
of~\cite{DouglasRepka2025} from the semisimple setting to the symmetrizable 
Kac--Moody setting.

\begin{theorem}[Sufficient condition for wideness]\label{lwide}
Let $\mathfrak{g}$ be a symmetrizable Kac--Moody algebra with Cartan
subalgebra $\mathfrak{h}$, root system $\Phi$, and simple roots $\Pi$. Let $T\subseteq \Phi$
be the closed subset of a regular subalgebra $\mathfrak{s}$, such that $\mathfrak{h}\subseteq \mathfrak{s}$.
Then,
\vspace{2pt}
\[
\Pi \subseteq \operatorname{cl}_{\re}(T\cup(-T))
\quad\Longrightarrow\quad
\mathfrak{s}\ \text{is wide}.
\]
\end{theorem}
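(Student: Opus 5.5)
The plan is to combine Theorem~\ref{nzero} with Lemma~\ref{newidempotentlemma}; nothing new needs to be developed. Fix an arbitrary irreducible integrable highest weight $\mathfrak{g}$-module $V$. By definition of wideness, we must show that $V$ is indecomposable as an $\mathfrak{s}$-module.

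First, the hypotheses of Theorem~\ref{theorem} are exactly those of Theorem~\ref{nzero}. Indeed, $\mathfrak{s}$ is regular with $\mathfrak{h}\subseteq\mathfrak{s}$, $T$ is its associated closed subset, and $\Pi\subseteq\operatorname{cl}_{\re}(T\cup(-T))$. Applying Theorem~\ref{nzero} therefore gives $(\End V)^{\mathfrak{s}}=\mathbb{C}\Id_V$.

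Second, we determine the idempotents in this algebra. Let $F=c\,\Id_V\in(\End V)^{\mathfrak{s}}$ satisfy $F^2=F$. Then $c^2\Id_V=c\,\Id_V$, and since $V\neq\{0\}$ this forces $c^2=c$, so $c\in\{0,1\}$. Hence $F\in\{0,\Id_V\}$, and $(\End V)^{\mathfrak{s}}$ contains no non-trivial idempotents.

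Third, by the ($\Longleftarrow$) direction of Lemma~\ref{newidempotentlemma}, $V$ is $\mathfrak{s}$-indecomposable. Since $V$ was arbitrary, $\mathfrak{s}$ is wide.

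There is no substantive obstacle here. All of the work lies in Theorem~\ref{nzero}, which is taken as given, and the remaining steps are purely formal. The only point to be careful about is that $V\neq 0$, which holds because $V$ contains a highest weight vector; this is what rules out the degenerate case in the idempotent computation.
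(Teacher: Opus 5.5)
Your proposal is correct and matches the paper's proof: Theorem~\ref{nzero} gives $(\End V)^{\mathfrak{s}}=\mathbb{C}\Id_V$, so any idempotent there is $0$ or $\Id_V$, and the ($\Longleftarrow$) direction of Lemma~\ref{newidempotentlemma} then gives $\mathfrak{s}$-indecomposability of every $V$. One small fix: the cross-reference in your first step should point to the statement being proved, Theorem~\ref{lwide}.
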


\begin{proof} Suppose $\Pi\subseteq \operatorname{cl}_{\re}(T\cup(-T))$, $\mathfrak{h}\subseteq \mathfrak{s}$, and let
\[F \in (\mathrm{End}\,V)^{\mathfrak{s}}\] be idempotent, where $V$ is an arbitrary irreducible integrable highest weight module of $\mathfrak{g}$.  
Since $F \in (\mathrm{End}\,V)^{\mathfrak{s}}$, then $F \in \mathbb{C} \Id_{V}$ by Theorem \ref{nzero}. 
Then, since $F$ is idempotent, $F=0$ or $F=\Id_{V}$. That is, $F$ is a trivial idempotent. Hence, by Lemma \ref{newidempotentlemma}, $\mathfrak{s}$ is wide.
\end{proof}

\vspace{0.5em}

\begin{proposition}[Closure under finite root sums in affine type]\label{affine-sum-prop}
Let $\Phi$ be an affine root system, and let $S$ be a closed subset of $\Phi$.
If $\beta_1,\dots,\beta_n\in S$ and
\[
\beta_1+\cdots+\beta_n\in \Phi,
\]
then
\[
\beta_1+\cdots+\beta_n\in S.
\]
\end{proposition}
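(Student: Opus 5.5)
We argue by strong induction on $n$; the cases $n=1$ (trivial) and $n=2$ (the definition of closedness) start it. Set $\gamma=\beta_1+\cdots+\beta_n\in\Phi$. We use the standard description of the affine root system. Every root has the form $\beta=\bar\beta+k\delta$, where $\bar\beta$ lies in a finite (possibly non-reduced) root system $\bar\Phi$ or $\bar\beta=0$. The form satisfies $(\beta\mid\beta')=(\bar\beta\mid\bar\beta')$, and $\Phi_{\im}=\{k\delta\mid k\neq 0\}$. The goal is one of two reductions:
\begin{itemize}
\item[(a)] find an index $i$ with $\gamma-\beta_i\in\Phi$; then $\gamma-\beta_i\in S$ by induction, and $\gamma=(\gamma-\beta_i)+\beta_i\in S$ by closedness;
\item[(b)] find a pair $i\neq j$ with $\beta_i+\beta_j\in\Phi\cup\{0\}$; then either replace $\beta_i,\beta_j$ by $\beta_i+\beta_j\in S$, or delete both when the sum is $0$ (this needs $n\ge 3$, since $\gamma\neq 0$). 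The resulting sum has fewer terms, and the induction hypothesis applies.
\end{itemize}

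\emph{Case $\gamma$ real.} Since $0<(\gamma\mid\gamma)=\sum_i(\gamma\mid\beta_i)$, some $i$ has $(\gamma\mid\beta_i)>0$. This $\beta_i$ is real, because imaginary roots are orthogonal to everything. If $\gamma=\beta_i$ we are done. If $\gamma,\beta_i$ are non-proportional, the $\beta_i$-string through $\gamma$ (the string property recalled in Section~\ref{background}, from \cite[Proposition 5.1]{Kac90}) has $p-q=\gamma(\beta_i^\vee)>0$, so $p\ge 1$ and $\gamma-\beta_i\in\Phi$, which is reduction (a). Real roots proportional to each other occur only as $\pm\beta$ and $\pm2\beta$, the latter only in type $A_{2\ell}^{(2)}$, so the remaining case is $\gamma=2\beta_i$ or $\beta_i=2\gamma$. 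We treat it by hand with $\mathfrak{sl}_2$-string reasoning: $\gamma-\beta_i$ is then a root or a nonzero multiple of a root, and we either apply (a) or switch to the pairing argument below.

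\emph{Case $\gamma=k\delta$ imaginary.} Here we use (b). If every $\beta_j$ is imaginary, all pairwise sums are multiples of $\delta$, hence roots or $0$. Otherwise pick a real $\beta_i$. Since $\sum_j\bar\beta_j=0$, we get $0=\sum_j(\bar\beta_i\mid\bar\beta_j)$ with the term $(\bar\beta_i\mid\bar\beta_i)>0$, so some $j\neq i$ has $(\beta_i\mid\beta_j)<0$. That $\beta_j$ is real, and the string property gives $\beta_i+\beta_j\in\Phi\cup\{0\}$. The proportional subcase $\beta_j=-\beta_i$ is already covered, since the sum is $0$.

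The main obstacle is the first case. A positive pairing alone does not settle non-reduced proportional configurations in $A_{2\ell}^{(2)}$, and partial sums can vanish. If the direct argument gets messy there, I would instead run the pairing argument of (b) in both cases: choose a real $\beta_i$, use $\sum_j(\beta_i\mid\beta_j)=(\beta_i\mid\gamma)$, and when this is not negative fall back on the $(\gamma\mid\beta_i)>0$ step of reduction (a).
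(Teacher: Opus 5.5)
Your proof is correct and is essentially the paper's argument. In the real case you pick $\beta_i$ with $(\gamma\mid\beta_i)>0$ and use a root string to get $\gamma-\beta_i\in\Phi$; you take the $\beta_i$-string through $\gamma$, while the paper takes the $\gamma$-string through $\beta_i$. In the imaginary case your negative-pairing step is the contrapositive of the paper's Case~2.1, which first assumes no pairwise sum lies in $\Phi\cup\{0\}$ and derives the contradiction $(\beta_k\mid\eta)>0$.

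The proportional subcase you flag as the main obstacle is actually empty. By \cite[Proposition 5.1]{Kac90}, the only multiples of a real root that are roots are $\pm$ itself; in $A_{2\ell}^{(2)}$ the non-reducedness appears only in the gradient $\bar\Phi$, not in $\Phi$. Even if $\gamma=2\beta_i$ or $\beta_i=2\gamma$ did occur, $\gamma-\beta_i\in\{\beta_i,-\gamma\}$ would be a root, so reduction (a) would apply directly and no fallback is needed.
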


\begin{proof}
We proceed by induction on $n$. For $n=2$, the result follows directly from $S$ being closed. 

Now let
\[
\eta=\beta_1+\cdots+\beta_n\in \Phi,
\]
and assume the statement holds for sums of fewer than $n$ roots.

If there exist $i\neq j$ such that $\beta_i+\beta_j\in \Phi$, then since $S$ is closed,
\[
\beta_i+\beta_j\in S,
\]
and replacing $\beta_i,\beta_j$ by $\beta_i+\beta_j$ reduces the length of the sum by one.
Hence, by induction, $\eta\in S$.

If there exist $i\neq j$ such that $\beta_i+\beta_j=0$, then deleting this pair reduces the length
of the sum by two without changing the total sum, so again induction gives $\eta\in S$.

Thus we may assume that
\[
\beta_i+\beta_j\notin \Phi\cup\{0\}\qquad (i\neq j).
\]

It therefore suffices to show that there exists $m\in \{1,\dots,n\}$ such that
\[
\eta-\beta_m\in \Phi,
\]
for then
\[
\eta-\beta_m=\sum_{i\neq m}\beta_i
\]
is a sum of fewer than $n$ elements of $S$, and the induction hypothesis yields
$\eta-\beta_m\in S$. Since also $\beta_m\in S$ and
\[
(\eta-\beta_m)+\beta_m=\eta\in \Phi,
\]
the closedness of $S$ gives $\eta\in S$.

We consider two cases.

\medskip

\noindent\underline{Case 1. $\eta\in \Phi_{\re}$}\,:
Since
\[
(\eta\mid \eta)=\sum_{i=1}^n (\eta\mid \beta_i)>0,
\]
there exists $m\in \{1,\dots,n\}$ such that
\[
(\eta\mid \beta_m)>0.
\]
As $\eta$ is real, Proposition 5.1 of \cite{Kac90} implies that
\[
\beta_m-\eta\in \Phi\cup\{0\}.
\]
Hence
\[
\eta-\beta_m\in \Phi\cup\{0\}.
\]
If $\eta-\beta_m=0$, then $\eta=\beta_m\in S$, and we are done. Thus we may assume that
\[
\eta-\beta_m\in \Phi.
\]
\medskip

\noindent\underline{Case 2. $\eta\in \Phi_{\im}$}\,:
Since $\Phi$ is affine, we have
\[
\eta=t\delta
\qquad\text{for some } t\in \mathbb Z\setminus\{0\}.
\]

\smallskip

\noindent\underline{Case 2.1. $\beta_k\in \Phi_{\re}$ for some $k$}\,:
Since $\beta_k+\beta_i\notin \Phi\cup\{0\}$ for $i\ne k$, we have
\[
(\beta_k\mid \beta_i)\ge 0 \qquad (i\ne k)
\]
by \cite[Proposition 5.1]{Kac90}. Therefore
\[
(\beta_k\mid \eta)
=\sum_{i=1}^n (\beta_k\mid \beta_i)
=(\beta_k\mid \beta_k)+\sum_{i\ne k}(\beta_k\mid \beta_i)>0,
\]
since $(\beta_k\mid \beta_k)>0$.
On the other hand, $\eta=t\delta$ for some $t\neq 0$, and in affine type
\[
(\delta\mid \beta_k)=0.
\]
Hence
\[
(\beta_k\mid \eta)=t(\beta_k\mid \delta)=0,
\]
a contradiction.
Thus, this case cannot occur.

\smallskip

\noindent\underline{Case 2.2. $\beta_k\in \Phi_{\im}$ for all $k$} \,:
Then each $\beta_k=m_k\delta$ with $m_k\neq 0$. But for any $i\neq j$,
\[
\beta_i+\beta_j=(m_i+m_j)\delta \in \Phi\cup\{0\},
\]
contrary to our standing assumption that
\[
\beta_i+\beta_j\notin \Phi\cup\{0\}.
\]
Thus this case cannot occur.

\medskip

In all cases, we find $m$ such that $\eta-\beta_m\in \Phi$, and the induction argument
completes the proof.
\end{proof}

\begin{remark}
Proposition~\ref{affine-sum-prop} strengthens the notion of closed subsets
in affine root systems by showing that  closed subsets are
closed under arbitrary finite root sums that remain roots. This is a key ingredient
in the proof of Theorem~\ref{affine-necessary} below. 

The analogous closure property holds in finite type root systems
(cf.~\cite[Lemma 3.2]{DouglasRepka2025}). In general symmetrizable Kac--Moody type,
the argument used to establish this result, in the proof of Proposition \ref{affine-sum-prop},   does not hold.
\unskip\hfill$\diamond$
\end{remark}

\begin{definition}[$\mathfrak{s}$-weight graphs]
Let $V$ be a $\mathfrak g$-module with weight decomposition
$V=\bigoplus_{\lambda} V_\lambda$, and let $\mathfrak s$ be a regular subalgebra with associated closed subset $T\subseteq \Phi$. The \emph{$\mathfrak s$-weight graph} of $V$ has vertex set
\[
\{\lambda \in \mathfrak h^* \mid V_\lambda \neq 0\},
\]
with an edge between $\lambda$ and $\mu$ if $\mu-\lambda \in T\cup(-T)$.
\end{definition}

\vspace{0.5em}

\begin{theorem}[Necessary condition for wideness]\label{affine-necessary}
Let $\mathfrak g$ be an affine Kac--Moody algebra with Cartan subalgebra
$\mathfrak h$, and let $\mathfrak s\subseteq \mathfrak g$ be a regular
subalgebra normalized by $\mathfrak h$. Let $T\subseteq \Phi$ be the associated closed subset. Then
\[
\mathfrak{s}\ \text{is wide}
\quad\Longrightarrow\quad
[T\cup(-T)] = \Phi.
\]
\end{theorem}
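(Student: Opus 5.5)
We argue by contraposition: assuming $R\coloneqq[T\cup(-T)]\subsetneq\Phi$, we construct an irreducible integrable highest weight module $V$ that decomposes as an $\mathfrak{s}$-module. The natural tool is the $\mathfrak{s}$-weight graph. For any $\mathfrak{g}$-module $V$ with weight decomposition, $\mathfrak{s}$ is regular, so $\mathfrak{s}=(\mathfrak{s}\cap\mathfrak h)\oplus\bigoplus_{\alpha\in T}(\mathfrak{s}\cap\mathfrak g_\alpha)$. Hence $\mathfrak{s}$ maps $V_\lambda$ into $V_\lambda\oplus\bigoplus_{\alpha\in T}V_{\lambda+\alpha}$. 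It follows that for each connected component $C$ of the $\mathfrak{s}$-weight graph, $V_C\coloneqq\bigoplus_{\lambda\in C}V_\lambda$ is an $\mathfrak{s}$-submodule, and $V=\bigoplus_C V_C$. So it suffices to produce some $V$ whose $\mathfrak{s}$-weight graph is disconnected. Note that this argument never uses $\mathfrak h\subseteq\mathfrak{s}$, which matches the statement.

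First we relate connectivity to $R$ via Proposition~\ref{affine-sum-prop}. Suppose $\lambda$ and $\mu$ lie in the same component. Then $\mu-\lambda=\gamma_1+\cdots+\gamma_k$ with each $\gamma_i\in T\cup(-T)\subseteq R$. Since $R$ is closed, Proposition~\ref{affine-sum-prop} gives the following: if $\mu-\lambda\in\Phi$, then $\mu-\lambda\in R$. So it is enough to find $V$ and two weights $\lambda,\mu$ with $\mu-\lambda=\beta\in\Phi\setminus R$. Then $\lambda$ and $\mu$ lie in different components, and $V$ is $\mathfrak{s}$-decomposable.

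Next, take $\beta\in\Phi\setminus R$. Since $R$ is symmetric, we may replace $\beta$ by $-\beta$ and assume $\beta\in\Phi^+$. Take $V=L(\Lambda)$ with $\Lambda$ dominant integral and sufficiently regular, for instance $\Lambda=N\rho$ with $N$ large, or even the adjoint-like choice, some $\Lambda$ with $\Lambda(\alpha_i^\vee)\geq 1$ for all $i$. We want to show that $\Lambda-\beta$ is a weight of $L(\Lambda)$, giving the pair $\lambda=\Lambda-\beta$, $\mu=\Lambda$.
\begin{itemize}
\item If $\beta$ is real, write $\beta=w\alpha_j$. It is cleaner to use the string property: for any positive root $\beta$ and regular dominant $\Lambda$, $\Lambda-\beta\in\mathrm{wts}(L(\Lambda))$. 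This follows from the standard description of weights of integrable modules (\cite[Proposition 12.5]{Kac90}): $P(\Lambda)$ consists of the $W$-conjugates of dominant weights $\mu\le\Lambda$ with the saturation property, and $\Lambda-\beta$ lies in the convex hull of $W\Lambda$ when $\Lambda$ is regular enough.
\item If $\beta=t\delta$ is imaginary, take $\Lambda$ of positive level. Then $\Lambda-t\delta$ is a weight of $L(\Lambda)$ for every $t\ge 0$, since the weights satisfy $\Lambda-\delta\mathbb Z_{\ge0}\subseteq P(\Lambda)$ for affine $\Lambda$ with positive level (\cite[Lemma 12.6]{Kac90}). The only other condition is $\Lambda-t\delta$ lying in $P(\Lambda)$.
\end{itemize}

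The main obstacle is this last step: verifying cleanly that $\Lambda-\beta$ is a weight for a suitable $\Lambda$ depending on $\beta$. The first thing to try is to use \cite[Proposition 12.5]{Kac90}: $\mathrm{wts}(L(\Lambda))=W\cdot\{\mu\text{ dominant}:\mu\le\Lambda\}$ for affine $\mathfrak g$. Then choose $\Lambda$ with $\Lambda(\alpha_i^\vee)$ large for all $i$, so that $\Lambda-\beta$ is itself dominant, which is possible since $\beta$ is a fixed finite combination of simple roots. This makes $\Lambda-\beta\le\Lambda$ dominant, hence a weight. This uniform choice handles both the real and imaginary cases at once.
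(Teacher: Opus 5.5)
Your proof is correct. Its first two steps match the paper's proof: the decomposition $V=\bigoplus_C V_C$ over components of the $\mathfrak s$-weight graph, which indeed never uses $\mathfrak h\subseteq\mathfrak s$, and the use of Proposition~\ref{affine-sum-prop} to show that two weights differing by a root outside $R$ lie in different components.

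You diverge in how you produce the two separated weights. The paper first notes, again via Proposition~\ref{affine-sum-prop}, that $\Pi\subseteq R$ would force every positive root, and hence all of $\Phi$, into $R$. So some simple root $\alpha_k$ lies outside $R$. The witness is then $V(\omega_k)$, in which $\omega_k-\alpha_k$ is a weight by the one-line $\mathfrak{sl}_2$ fact $e_kf_kv_{\omega_k}=v_{\omega_k}\neq0$.

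You skip that reduction and keep an arbitrary $\beta\in\Phi^+\setminus R$. This costs you a genuine fact about weights of $L(\Lambda)$. Your final uniform choice works: take $\Lambda(\alpha_i^\vee)\ge\max(0,\beta(\alpha_i^\vee))$, so that $\Lambda-\beta$ is dominant and $\le\Lambda$. However, the description of the weights of $L(\Lambda)$ as $W$-conjugates of dominant $\mu\le\Lambda$ needs positive level. For $\Lambda=0$, the weight $-\delta$ is dominant and $\le 0$, yet it is not a weight of the trivial module. Your $\Lambda$ has positive level, so the argument survives. The two bullet points before it should be dropped: the convex-hull remark is not an argument, and the ``only other condition'' sentence is circular.

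If you want your route without invoking that theorem, there is an elementary substitute. For $0\neq y\in\mathfrak g_{-\beta}$, choose $x\in\mathfrak g_\beta$ with $(x\mid y)=1$. Then
\[
x\,y\,v_\Lambda=[x,y]\,v_\Lambda=\Lambda(\nu^{-1}(\beta))\,v_\Lambda=(\Lambda\mid\beta)\,v_\Lambda .
\]
This is nonzero as soon as $\Lambda(\alpha_i^\vee)\ge1$ for all $i$, since then $(\Lambda\mid\alpha_i)>0$ for every $i$. Hence $\Lambda-\beta$ is a weight for every positive root $\beta$ at once.

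In short, the paper's reduction to a missing simple root gives the most elementary witness. Your version avoids that reduction at the price of a (mild) statement about which $\Lambda-\beta$ occur as weights.
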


\begin{proof}
Set
\[
S=[T\cup(-T)].
\]
Then $S$ is symmetric and closed.

By way of contradiction, assume that $S\neq \Phi$. Then there exists $\alpha_k\in \Pi$
such that
\[
\alpha_k\notin S.
\]
Indeed, if $\Pi\subseteq S$, then, since $S$ is symmetric, also $-\Pi\subseteq S$.
By Proposition~\ref{affine-sum-prop}, every positive real root, being a sum of
simple roots, belongs to $S$, and hence $\Phi_{\re}\subseteq S$. Moreover, in
affine type the null root $\delta$ is a positive root and a sum of simple roots,
so $\delta\in S$. Applying Proposition~\ref{affine-sum-prop} again, we obtain
$m\delta\in S$ for every $m\in\mathbb Z\setminus\{0\}$, hence $\Phi_{\im}\subseteq S$.
Therefore $\Phi= S$, a contradiction.

Now consider the irreducible integrable highest weight module
\[
V=V(\omega_k),
\]
where $\omega_k$ is the $k$th fundamental weight. Since $\omega_k(\alpha_k^\vee)=1$, the weight $\omega_k-\alpha_k$ occurs in $V$ (indeed, $f_k v_{\omega_k}\neq 0$).

We claim that $\omega_k$ and $\omega_k-\alpha_k$ lie in distinct connected
components of the $\mathfrak s$-weight graph of $V$.

Suppose not. Then there exists a path
\[
\omega_k=\lambda_0,\lambda_1,\dots,\lambda_r=\omega_k-\alpha_k
\]
in the $\mathfrak s$-weight graph. By definition of the graph, for each
$j=1,\dots,r$ there exists $\beta_j\in T\cup(-T)\subseteq S$ such that
\[
\lambda_j-\lambda_{j-1}=\beta_j.
\]
Summing gives
\[
-\alpha_k
=(\omega_k-\alpha_k)-\omega_k
=\beta_1+\cdots+\beta_r.
\]
Since $-\alpha_k\in \Phi$ and each $\beta_j\in S$, Proposition~\ref{affine-sum-prop}
implies that
\[
-\alpha_k\in S.
\]
As $S$ is symmetric, this yields $\alpha_k\in S$, a contradiction.

Thus $\omega_k$ and $\omega_k-\alpha_k$ lie in different connected components of the $\mathfrak{s}$-weight graph of $V$, so the graph has at least two connected components. For each connected component $C$ of the $\mathfrak s$-weight graph, set
\[
V_C \coloneqq \bigoplus_{\lambda \in C} V_\lambda.
\]
Since $\mathfrak s$ is regular and normalized by $\mathfrak{h}$, we have
\[
\mathfrak s=(\mathfrak s\cap \mathfrak h)\oplus \bigoplus_{\beta\in T}(\mathfrak s\cap \mathfrak g_\beta).
\]
The subalgebra $\mathfrak s\cap \mathfrak h$ preserves each weight space, and for $x\in \mathfrak s\cap \mathfrak g_\beta$ with $\beta\in T$, one has
\[
x(V_\lambda)\subseteq V_{\lambda+\beta}.
\]
If $V_{\lambda+\beta}\neq 0$, then $\lambda$ and $\lambda+\beta$ are adjacent in the $\mathfrak s$-weight graph, so $\lambda+\beta$ lies in the same connected component as $\lambda$. Hence each $V_C$ is an $\mathfrak s$-submodule. Since the connected components partition the vertex set, we obtain a direct sum decomposition
\[
V=\bigoplus_C V_C.
\]
As the graph has at least two connected components, this yields a nontrivial decomposition of $V$ as an $\mathfrak s$-module, contradicting wideness.

Therefore,
\[
[T\cup(-T)]=\Phi.
\]
\end{proof}

We conclude this section with two straightforward examples illustrating the main theorems. The first applies Theorem~\ref{lwide} to construct a wide regular subalgebra of an affine Kac--Moody algebra. The second applies Theorem~\ref{affine-necessary} to exhibit a regular subalgebra of the same algebra that is not wide.

\begin{example}[A wide subalgebra in type $A_1^{(1)}$]
Let $\mathfrak g$ be the affine Kac--Moody algebra of type $A_1^{(1)}$,
with generalized Cartan matrix
\[
A=
\begin{pmatrix}
2 & -2\\
-2 & 2
\end{pmatrix},
\]
simple roots $\Pi=\{\alpha_0,\alpha_1\}$, and null root
\[
\delta=\alpha_0+\alpha_1.
\]
Then
\[
\Phi_{\re}=\{\pm\alpha_1+n\delta \mid n\in\mathbb Z\},
\qquad
\Phi_{\im}=\{k\delta \mid k\in\mathbb Z\setminus\{0\}\}.
\]
Define
\[
T=\{\alpha_1,\,-\alpha_0\}.
\]
Then $T$ is closed, since all pairwise sums of roots in $T$ are not roots:
\[
\alpha_1+\alpha_1\notin \Phi,\qquad
(-\alpha_0)+(-\alpha_0)\notin \Phi,\qquad
\alpha_1-\alpha_0=2\alpha_1-\delta\notin \Phi.
\]
We have
\[
T\cup(-T)=\{\pm\alpha_1,\pm\alpha_0\},
\]
and therefore
\[
\Pi\subseteq T\cup(-T)\subseteq \operatorname{cl}_{\re}(T\cup(-T)).
\]
It follows from Theorem~\ref{lwide} that the regular subalgebra
\[
\mathfrak s
=
\mathfrak h
\oplus
\mathfrak g_{\alpha_1}
\oplus
\mathfrak g_{-\alpha_0}
\]
is wide.
\unskip\hfill$\triangle$
\end{example}

\begin{example}[A non-wide subalgebra in type $A_1^{(1)}$]
Let $\mathfrak g$ be the affine Kac--Moody algebra of type $A_1^{(1)}$, as above. Define
\[
T=\{\alpha_1\}.
\]
Since $\alpha_1+\alpha_1\notin \Phi$, the set $T$ is closed. We then have
\[
T\cup(-T)=\{\pm\alpha_1\},
\qquad
[T\cup(-T)]=\{\pm\alpha_1\}\neq \Phi.
\]
It follows from Theorem~\ref{affine-necessary} that the regular subalgebra
\[
\mathfrak s
=
\mathfrak h
\oplus
\mathfrak g_{\alpha_1}
\]
is not wide.
\unskip\hfill$\triangle$
\end{example}

\section{Conclusion}\label{conclusion}

In this article, we studied wide regular subalgebras of symmetrizable
Kac--Moody algebras, extending aspects of the theory from the
finite-dimensional semisimple setting in \cite{DouglasRepka2025}. In the Cartan-regular case
$\mathfrak{h}\subseteq \mathfrak{s}$, we proved that
\[
\Pi \subseteq \operatorname{cl}_{\re}(T\cup(-T))
\quad\Longrightarrow\quad
\mathfrak{s}\ \text{is wide}.
\]

In the affine case, we established the converse implication
\[
\mathfrak{s}\ \text{is wide}
\quad\Longrightarrow\quad
[T\cup(-T)]=\Phi,
\]
which does not require the assumption that $\mathfrak{h}\subseteq \mathfrak{s}$.

A main ingredient is an extension of Schur's lemma, which replaces complete reducibility in this setting. In particular, under the same real-root closure condition, the invariant endomorphisms satisfy $(\End V)^{\mathfrak{s}}=\mathbb{C}\Id_V$ for any irreducible integrable highest weight module $V$, whenever $\mathfrak{h}\subseteq \mathfrak{s}$.

The approach differs fundamentally from that in the finite-dimensional semisimple setting of \cite{DouglasRepka2025}. In the semisimple case, the argument relies on the complete reducibility of $\End(V)$ and the finiteness of its weight decomposition, neither of which holds in the Kac--Moody setting. The presence of imaginary roots introduces additional complications and necessitates new structural tools, such as the affine finite-sum closure property established here.

Several natural questions remain open. First, it would be desirable to determine necessary and sufficient conditions for an arbitrary regular subalgebra to be wide, including those that do not necessarily contain a Cartan subalgebra. In the present work, we obtain a sufficient condition under the assumption $\mathfrak{h}\subseteq \mathfrak{s}$, and, in the affine case, a necessary condition that does not require this assumption. Bridging this gap in the general symmetrizable Kac--Moody setting remains an open problem and may require additional conditions on the Cartan component $\mathfrak{s}\cap \mathfrak{h}$ or on imaginary roots. In particular, further extensions of Schur's lemma may be needed. It is natural to conjecture that requiring $\mathfrak{s}$ to contain a complement of $\mathfrak{h}\cap[\mathfrak{g},\mathfrak{g}]$ in $\mathfrak{h}$, rather than all of $\mathfrak{h}$, may suffice. In the finite-dimensional semisimple case~\cite{DouglasRepka2025}, no condition on the Cartan component beyond closure under the Lie bracket is required; however, extending these results to the Kac--Moody setting presents additional difficulties.

A second direction for future work is the study of \emph{narrow}
subalgebras (i.e., those for which every nontrivial irreducible
integrable highest weight module becomes decomposable upon restriction)
and of \emph{regular extreme} algebras, that is, algebras for which every
regular subalgebra is either narrow or wide. Developing such a theory
for symmetrizable Kac--Moody algebras would parallel the semisimple
results established in~\cite{DouglasRepka2025}.

\section*{Acknowledgments}

A. Douglas was supported in part by PSC-CUNY Award \#GR-00017525.

\end{document}